\newcommand{\be}{\begin{equation}}
\newcommand{\ee}{\end{equation}}
\newcommand{\bea}{\begin{eqnarray}}
\newcommand{\eea}{\end{eqnarray}}
\newtheorem{corollary}{Corollary}
\newtheorem{lemma}{Lemma}
\newtheorem{conjecture}{Conjecture}
\newenvironment{proof}[1][Proof]{\begin{trivlist}
\item[\hskip\labelsep {\bfseries #1}]}{\end{trivlist}}
\def\1#1{^{(#1)}}
\def\la{\langle}
\def\ra{\rangle}
\begin{document}
\title{Symmetric polynomials associated with numerical semigroups}
\author{Leonid G. Fel\\ \\
Department of Civil Engineering, Technion, Haifa 32000, Israel\\
{\em e-mail: lfel@technion.ac.il}} 
\date{}
\maketitle
\def\be{\begin{equation}}
\def\ee{\end{equation}}
\def\bea{\begin{eqnarray}}
\def\eea{\end{eqnarray}}
\def\p{\prime}
\vspace{-1cm}
\begin{abstract}
We study a new kind of symmetric polynomials $P_n(x_1,\ldots,x_m)$ of degree $n$
in $m$ real variables, which have arisen in the theory of numerical semigroups. 
We establish their basic properties and find their representation through the 
power sums $E_k=\sum_{j=1}^mx_j^k$. We observe a visual similarity between 
normalized polynomials $P_n(x_1,\ldots,x_m)/\chi_m$, where $\chi_m\!=\!\prod_{
j=1}^mx_j$, and a polynomial part of a partition function $W\left(s,\{d_1,
\ldots,d_m\}\right)$, which gives a number of partitions of $s\ge 0$ into $m$ 
positive integers $d_j$, and put forward a conjecture about their relationship.
\\
{\bf Keywords:} symmetric polynomials, numerical semigroups, theory of 
partition\\
{\bf 2010 Mathematics Subject Classification:} Primary -- 20M14, Secondary 
--
11P81.
\end{abstract}
\section{Symmetric polynomials $P_n({\bf x}^m)$ and their factorization}
\label{l1}
In 2017, studying the polynomial identities of arbitrary degree for syzygies 
degrees of numerical semigroups $\la d_1,\ldots,d_m\ra$, we have introduced a 
new kind of symmetric polynomials $P_n(x_1,\ldots,x_m)$ of degree $n$ in $m$ 
real variables $x_j$ (see \cite{fl17}, section 5.1),
\bea
P_n({\bf x}^m)=\sum_{j=1}^mx_j^n-\sum_{j>r=1}^m\left(x_j+x_r\right)^n+\sum_{j
>r>i=1}^m\left(x_j+x_r+x_i\right)^n-\ldots-(-1)^m\left(\sum_{j=1}^mx_j\right)^n,
\label{j1}
\eea
where ${\bf x}^m$ denotes a tuple $\{x_1,\ldots,x_m\}$ and $P_n({\bf x}^m)$ is 
invariant under the action of the symmetric group $S_m$ on a set of variables 
$\{x_1,\ldots,x_m\}$ by their permutations. Such polynomials arise in the 
rational representation of the Hilbert series for the complete intersection 
semigroup ring associated with a symmetric semigroup $\la d_1,\ldots,d_m\ra$. 
According to \cite{fl17}, the polynomials in (\ref{j1}) satisfy 
\bea
P_n({\bf x}^m)=0,\quad 1\le n\le m-1\qquad\mbox{and}\qquad P_m({\bf x}^m)=
(-1)^{m+1}m!\prod_{j=1}^mx_j.\label{j2}
\eea

In this paper, we study a factorization of $P_n({\bf x}^m)$ for $n>m$ and make 
use of this property to find a representation of $P_n({\bf x}^m)$ through the 
power sums $E_k=\sum_{j=1}^mx_j^k$, i.e., $P_n({\bf x}^m)=P_n(E_1,\ldots,E_n)$.
\begin{lemma}\label{le1}
$P_n({\bf x}^m)$ vanishes if at least one of the variables $x_j$ vanishes.
\end{lemma}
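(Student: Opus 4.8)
The plan is to set (without loss of generality, by the $S_m$-symmetry) $x_m=0$ and show that every term in (\ref{j1}) either vanishes or cancels against another. Splitting the alternating sum according to whether a subset $S\subseteq\{1,\dots,m\}$ of summation indices contains the index $m$ or not, I would observe that a subset $S$ with $m\notin S$ contributes the term $(-1)^{|S|+1}\bigl(\sum_{j\in S}x_j\bigr)^n$, while the subset $S\cup\{m\}$ contributes $(-1)^{|S|+2}\bigl(\sum_{j\in S}x_j+x_m\bigr)^n=(-1)^{|S|+2}\bigl(\sum_{j\in S}x_j\bigr)^n$ once $x_m=0$. These two contributions are negatives of each other, so they cancel in pairs; the only subset left unpaired this way would be $S=\{m\}$ itself (paired with $S=\emptyset$, which does not occur in the sum), and it contributes $x_m^n=0$. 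Hence $P_n({\bf x}^m)\big|_{x_m=0}=0$, and by symmetry the same holds when any single $x_j=0$.

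To make this rigorous I would first rewrite (\ref{j1}) in closed form as
\be
P_n({\bf x}^m)=-\sum_{\emptyset\neq S\subseteq\{1,\dots,m\}}(-1)^{|S|}\Bigl(\sum_{j\in S}x_j\Bigr)^{n},
\ee
check that this matches the displayed expansion (the coefficient of the $k$-element subsets is $-(-1)^k=(-1)^{k+1}$, and the last term $-(-1)^m(\sum x_j)^n$ corresponds to $S=\{1,\dots,m\}$), and then set $x_m=0$. The sum over subsets not containing $m$ and the sum over subsets containing $m$ are related by the bijection $S\leftrightarrow S\cup\{m\}$, which flips the sign $(-1)^{|S|}$, so after substituting $x_m=0$ the two families cancel term by term, leaving only the singleton $S=\{m\}$ contributing $x_m^n$, which is $0$. (If $n=0$ one should note $x_m^0=1\neq0$, but the statement is implicitly about $n\ge1$; in any case $P_0\equiv0$ by a separate trivial count, so I would either restrict to $n\ge1$ or remark on this.)

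The argument is essentially a one-line inclusion–exclusion cancellation, so I do not anticipate a genuine obstacle; the only point requiring care is bookkeeping the signs when passing between the explicitly written alternating sum in (\ref{j1}) and the compact subset-sum form, and making explicit that the pairing $S\mapsto S\cup\{m\}$ is a bijection between subsets of $\{1,\dots,m-1\}$ and subsets of $\{1,\dots,m\}$ containing $m$. Once that is in place the conclusion is immediate, and invoking $S_m$-invariance extends it from $x_m$ to an arbitrary vanishing variable.
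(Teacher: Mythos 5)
Your proposal is correct and takes essentially the same route as the paper: the paper also substitutes a single zero variable into (\ref{j1}) and cancels the resulting terms in pairs, which is exactly your bijection $S\leftrightarrow S\cup\{m\}$ written out as an explicit regrouping of the alternating sums. The only difference is notational (subset-sum form versus the displayed nested sums), plus your sensible side remark about $n\ge 1$.
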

\begin{proof}
Since $P_n({\bf x}^m)$ is invariant under all permutations of variables $\{x_1,
\ldots,x_m\}$, we have to prove
\bea
P_n(0,x_2,\ldots,x_m)=0.\label{j3}
\eea
Denote $P_n(0,x_2,\ldots,x_m)=P_n(0,{\bf x}^{m-1})$ and substitute $x_1=0$ into 
(\ref{j1}),
\bea
P_n(0,{\bf x}^{m-1})\!\!\!&\!\!=\!\!&\!\!\!\sum_{j=2}^mx_j^n\!-\!\left[\sum_{j
=2}^mx_j^n+\!\sum_{j>r=2}^m\!\left(x_j+x_r\right)^n\right]\!+\!\left[\sum_{j>r
=2}^m\!\left(x_j+x_r\right)^n+\!\!\sum_{j>r>i=2}^m\!\!\!\left(x_j+x_r+x_i
\right)^n\right]\nonumber\\
&-&\left[\sum_{j>r>i=2}^m\!\!\!\left(x_j+x_r+x_i\right)^n+\sum_{t>j>r>i=2}^m
\!\!\!\left(x_t+x_j+x_r+x_i\right)^n\right]+\ldots\nonumber\\
&+&(-1)^m\left[\sum_{j=2}^m\left(\sum_{r=2}^mx_j+x_r\right)^n+\left(\sum_{j=2}^m
x_j\right)^n\right]-(-1)^m\left(\sum_{j=2}^mx_j\right)^n.\nonumber
\eea
Recasting the terms in the last sum in $m$ pairs, we obtain,
\bea
P_n(0,{\bf x}^{m-1})\!\!\!&\!\!=\!\!&\!\!\!\left[\sum_{j=2}^mx_j^n\!-\sum_{j=2}
^mx_j^n\right]\!-\!\left[\sum_{j>r=2}^m\!\left(x_j\!+\!x_r\right)^n-\sum_{j>r=2}
^m\!\left(x_j\!+\!x_r\right)^n\right]\!+\!\left[\sum_{j>r>i=2}^m\!\!\!\left(x_j
\!+\!x_r\!+\!x_i\right)^n\right.\nonumber\\
&-&\left.\sum_{j>r>i=2}^m\left(x_j+x_r+x_i\right)^n\right]-\ldots+(-1)^m\left[
\left(\sum_{j=2}^mx_j\right)^n-\left(\sum_{j=2}^mx_j\right)^n\right]=0,\nonumber
\eea
and Lemma is proven.$\;\;\;\;\;\;\Box$
\end{proof}
\begin{corollary}\label{cor1}
$P_n({\bf x}^m)$ is factorizable by the product $\chi_m=\prod_{j=1}^mx_j$.
\end{corollary}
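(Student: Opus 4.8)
The plan is to deduce this directly from Lemma~\ref{le1} via the factor theorem, applied one variable at a time. Regard $P_n({\bf x}^m)$ as an element of the polynomial ring $R=\mathbb{R}[x_1,\ldots,x_m]$, and for the first step view it as a polynomial in the single variable $x_1$ with coefficients in $\mathbb{R}[x_2,\ldots,x_m]$. Lemma~\ref{le1} asserts that substituting $x_1=0$ returns the zero polynomial; hence, by the factor (remainder) theorem, $x_1$ divides $P_n({\bf x}^m)$ in $R$, so we may write $P_n({\bf x}^m)=x_1\,Q_1(x_1,\ldots,x_m)$ with $Q_1\in R$.

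Next I would iterate. Suppose the factor $x_1\cdots x_k$ has already been extracted, $P_n({\bf x}^m)=x_1\cdots x_k\,Q_k$ with $Q_k\in R$. Setting $x_{k+1}=0$ and invoking Lemma~\ref{le1} once more gives $x_1\cdots x_k\cdot\big(Q_k\big|_{x_{k+1}=0}\big)=0$ as an identity in $R$. Since $R$ is an integral domain and $x_1\cdots x_k\neq 0$, this forces $Q_k\big|_{x_{k+1}=0}=0$, and the factor theorem, now in the variable $x_{k+1}$, yields $x_{k+1}\mid Q_k$, i.e.\ $P_n({\bf x}^m)=x_1\cdots x_{k+1}\,Q_{k+1}$. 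After $m$ such steps one obtains $P_n({\bf x}^m)=\chi_m\,Q_m$ with $Q_m\in R$, which is the claim. Alternatively, one may simply observe that $x_1,\ldots,x_m$ are pairwise non-associate irreducibles in the unique factorization domain $R$, each dividing $P_n({\bf x}^m)$, so their product $\chi_m$ divides it.

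There is essentially no genuine obstacle here; the only point deserving a word of care is the cancellation $x_1\cdots x_k\cdot\big(Q_k\big|_{x_{k+1}=0}\big)=0\ \Rightarrow\ Q_k\big|_{x_{k+1}=0}=0$, which is valid precisely because $R$ (equivalently, $R/(x_{k+1})$) has no zero divisors. It is worth recording, for the sequel, that since $\deg P_n({\bf x}^m)=n$ the cofactor $Q_m$ is a symmetric polynomial of degree $n-m$, which is exactly the normalized polynomial $P_n({\bf x}^m)/\chi_m$ referred to in the introduction and studied in the rest of the paper.
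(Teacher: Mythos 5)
Your argument is correct and follows the same route as the paper: symmetry plus Lemma~\ref{le1} gives that each $x_j$ divides $P_n({\bf x}^m)$, and the factor theorem then yields divisibility by $\chi_m$. You merely make explicit the step the paper leaves implicit — that divisibility by each $x_j$ separately implies divisibility by their product, justified via iterated extraction in the integral domain $\mathbb{R}[x_1,\ldots,x_m]$ (or the UFD observation) — which is a welcome tightening but not a different proof.
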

\begin{proof}
Since $P_n({\bf x}^m)$ is invariant under all permutations of variables $\{x_1,
\ldots,x_m\}$, then Lemma \ref{le1} is true if we replace $x_1=0$ by any other 
variable $x_j$, i.e.,
\bea
P_n(0,x_2,\ldots,x_m)=P_n(x_1,0,\ldots,x_m)=\ldots=P_n(x_1,x_2,\ldots,0)=0.
\label{j4}
\eea
Thus, equation $P_n(x_1,\ldots,x_m)=0$ has, at least, $m$ independent roots $x_1
=x_2=\ldots=x_m=0$. Then, by the polynomial factor theorem, $P_n({\bf x}^m)$ is 
factorizable by the product $\chi_m$.$\;\;\;\;\;\;\Box$
\end{proof}
In full agreement with (\ref{j2}), by Corollary \ref{cor1} it follows that $P_n(
{\bf x}^m)=0$ if $n<m$ and $P_m({\bf x}^m)=const$.
\begin{lemma}\label{le2}
$P_n({\bf x}^m)$ is factorizable by a sum $E_1=\sum_{j=1}^mx_j$ if $n-m=1(\bmod
\;2)$.
\end{lemma}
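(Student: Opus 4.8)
The plan is to show that $P_n({\bf x}^m)$ vanishes identically on the hyperplane $E_1=\sum_{j=1}^m x_j=0$; since $E_1$ is an irreducible (linear) polynomial, the polynomial factor theorem then yields that $E_1$ divides $P_n({\bf x}^m)$, exactly as $\chi_m$ was extracted in Corollary \ref{cor1}.

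First I would recast (\ref{j1}) in the compact form
\[
P_n({\bf x}^m)=\sum_{\emptyset\neq S\subseteq[m]}(-1)^{|S|+1}\,y_S^n,\qquad y_S:=\sum_{j\in S}x_j,
\]
where $[m]=\{1,\ldots,m\}$: the coefficient attached to a $k$-element subset $S$ is $(-1)^{k+1}$, which reproduces the alternating pattern in (\ref{j1}), the last term $(-1)^{m+1}E_1^n$ corresponding to $S=[m]$. Since $0^n=0$ for $n\ge 1$, I may also append the vacuous term $S=\emptyset$, so that the sum runs over all $2^m$ subsets of $[m]$ without changing its value.

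Next I would set $E_1=0$ and pair each subset $S$ with its complement $\bar S=[m]\setminus S$; note $S\neq\bar S$ always, since $S$ and $\bar S$ are disjoint with union $[m]$, so the $2^m$ subsets split into $2^{m-1}$ such pairs. On the hyperplane $y_{\bar S}=-y_S$, hence $y_{\bar S}^n=(-1)^n y_S^n$, while $|\bar S|=m-|S|$. The combined contribution of a pair $\{S,\bar S\}$ is therefore
\[
\Bigl[(-1)^{|S|+1}+(-1)^{m-|S|+1}(-1)^n\Bigr]y_S^n=(-1)^{|S|+1}\bigl[1+(-1)^{m+n}\bigr]y_S^n .
\]
When $n-m\equiv 1\,(\bmod\,2)$, the integer $m+n$ is odd as well, so the bracket $1+(-1)^{m+n}$ vanishes, every pair cancels, and $P_n({\bf x}^m)\equiv 0$ on $E_1=0$. (For $n-m$ even the same bracket equals $2$ and no cancellation occurs, consistent with the absence of the factor $E_1$ in that case, e.g. $P_4(x_1,x_2)=-2x_1x_2(2x_1^2+3x_1x_2+2x_2^2)$, which is not divisible by $x_1+x_2$.)

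The only point requiring care is the bookkeeping of signs in the pairing step, and in particular checking that the term $S=[m]$ — equal to $(-1)^{m+1}E_1^n$, which vanishes on the hyperplane anyway — together with the appended term $S=\emptyset$ are handled consistently with the generic pair. Beyond that sign computation I do not foresee a genuine obstacle: once vanishing on $E_1=0$ is established, irreducibility of the linear form $E_1$ closes the argument, and combined with Corollary \ref{cor1} it shows that $\chi_m E_1$ divides $P_n({\bf x}^m)$ whenever $n-m$ is odd.
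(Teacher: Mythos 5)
Your proof is correct, and it follows the same overall strategy as the paper --- show that $P_n({\bf x}^m)$ vanishes identically on the hyperplane $E_1=0$ by pairing complementary index sets and then invoke the factor theorem for the linear form $E_1$ --- but your bookkeeping is genuinely cleaner. The paper groups the terms of (\ref{j1}) by cardinality, pairing the size-$k$ block with the size-$(m-k)$ block; when $m=2q$ is even this leaves an unpaired middle block of size-$q$ subsets (the term $R_{2,n}$ in (\ref{j6})--(\ref{j7})), which forces a separate sub-argument (\ref{j8})--(\ref{j9}) pairing individual size-$q$ subsets with their complements. By appending the vacuous term $S=\emptyset$ and pairing each individual subset $S$ with $\bar S=[m]\setminus S$ from the outset, you obtain the single uniform cancellation factor $1+(-1)^{m+n}$ for every pair, with no case split on the parity of $m$ and no leftover block; your sign computation $(-1)^{|S|+1}\bigl[1+(-1)^{m+n}\bigr]$ checks out against the coefficients in (\ref{j1}). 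The one step worth stating explicitly is the multivariate factor theorem you rely on: since $E_1$ is a nonzero linear, hence irreducible, form over an infinite field, vanishing of $P_n$ on the hyperplane $E_1=0$ does imply $E_1\mid P_n$ (e.g.\ by a linear change of variables making $E_1$ a coordinate) --- the paper is if anything more casual about this point than you are. Your closing example $P_4({\bf x}^2)=-2x_1x_2(2x_1^2+3x_1x_2+2x_2^2)$ is a correct sanity check that the parity hypothesis is necessary.
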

\begin{proof}
Rewrite $P_n({\bf x}^m)$ as follows,
\bea
P_n({\bf x}^m)&=&\sum_{j=1}^mx_j^n-\sum_{j_1>j_2=1}^m\!\left(\sum_{k=1}^2x_{j_k}
\right)^n+\sum_{j_1>j_2>j_3=1}^m\!\left(\sum_{k=1}^3x_{j_k}\right)^n-\ldots
\label{j5}\\
&-&(-1)^m\sum_{j_1>j_2=1}^m\left(E_1-\sum_{k=1}^2x_{j_k}\right)^n+(-1)^m
\sum_{j=1}^m\left(E_1-x_j\right)^n-(-1)^mE_1^n,\nonumber
\eea
and substitute there $E_1=0$,
\bea
P_n({\bf x}^m)&=&\sum_{j=1}^mx_j^n-\sum_{j_1>j_2=1}^m\!\left(\sum_{k=1}^2x_{j_k}
\right)^n+\sum_{j_1>j_2>j_3=1}^m\!\left(\sum_{k=1}^3x_{j_k}\right)^n-\ldots
\nonumber\\
&+&(-1)^{m+n}\sum_{j_1>j_2>j_3=1}^m\!\left(\sum_{k=1}^3x_{j_k}\right)^n-(-1)^
{m+n}\sum_{j_1>j_2=1}^m\!\left(\sum_{k=1}^2x_{j_k}\right)^n+(-1)^{m+n}
\sum_{j=1}^mx_j^n.\nonumber
\eea
Recast the terms in the last sum as follows,
\bea
P_n({\bf x}^m)&=&\left[1+(-1)^{m+n}\right]R_{1.n}({\bf x}^m)+\frac{(-1)^{\mu}}
{2}\left[ 1+(-1)^m\right]R_{2,n}({\bf x}^m),\label{j6}\\
R_{1.n}({\bf x}^m)&=&\sum_{j=1}^mx_j^n-\sum_{j_1>j_2=1}^m\!\left(\sum_{k=1}^2
x_{j_k}\right)^n+\ldots-(-1)^{\mu}\!\sum_{j_1>j_2>\ldots>j_{\mu}=1}^m\!
\left(\sum_{k=1}^{\mu}x_{j_k}\right)^n,\nonumber\\
R_{2,n}({\bf x}^m)&=&\sum_{j_1>j_2>\ldots>j_{\mu+1}=1}^m\!\left(\sum_{k=1}^{\mu+1}
x_{j_k}\right)^n,\qquad\mu=\left\lfloor\frac{m-1}{2}\right\rfloor,\label{j7}
\eea
where $\lfloor a\rfloor$ denotes the integer part of $a$. 

According to (\ref{j6}), if $m+n=1(\bmod\;2)$ and $m=1(\bmod\;2)$, then $P_n(
{\bf x}^m)=0$. Consider another case when $m+n=1(\bmod\;2)$ and $m=0(\bmod\;2)$.
Put $m=2q$ and $n=2l+1$ in (\ref{j6},\ref{j7}) and obtain
\bea
P_{2l+1}\left({\bf x}^{2q}\right)=(-1)^{q-1}\sum_{j_1>j_2>\ldots>j_q=1}^{2q}
\!\left(\sum_{k=1}^qx_{j_k}\right)^{2l+1}.\label{j8}
\eea
In (\ref{j8}), a summation in the external sum $\sum_{j_1>j_2>\ldots>j_q=1}^{
2q}$ runs over all $(2q)!/(q!)^2$ permutations of $2q$ variables $x_j$ in terms 
$\left(\sum_{k=1}^qx_{j_k}\right)^{2l+1}$. That is why every such term has in 
(\ref{j8}) its counterpart,
\bea
&&\left(x_{j_1}+x_{j_2}+\ldots+x_{j_q}\right)^{2l+1}\quad\longleftrightarrow
\quad\left(x_{i_1}+x_{i_2}+\ldots+x_{i_q}\right)^{2l+1},\nonumber\\
&&\left\{x_{j_1},\ldots,x_{j_q}\right\}\cap\left\{x_{i_1},\ldots,x_{i_q}\right\}
=\emptyset,\nonumber\\
&&\#\left\{x_{j_1},\ldots,x_{j_q}\right\}=\#\left\{x_{i_1},\ldots,x_{i_q}
\right\}=q,\nonumber\\
&&\sum_{k=1}^qx_{j_k}+\sum_{k=1}^qx_{i_k}=E_1.\label{j9}
\eea
Recomposing the external sum in (\ref{j8}) as a sum over pairs, described in 
(\ref{j9}),
\bea
\left(\sum_{k=1}^qx_{j_k}\right)^{2l+1}+\left(\sum_{k=1}^qx_{i_k}\right)^{2l+1}
\nonumber
\eea
and making use of the last equality in (\ref{j9}), where $E_1=0$, we arrive at
$P_{2l+1}\left({\bf x}^{2q}\right)=0$.

Thus, the polynomial $P_n\left({\bf x}^m\right)$ is factorizable by $E_1$ if 
$n+m=1(\bmod\;2)$. That finishes the proof of Lemma since two modular 
equalities, $n+m=1(\bmod\;2)$ and $n-m=1(\bmod\;2)$, are identically equivalent.
$\;\;\;\;\;\;\Box$
\end{proof}
\begin{lemma}\label{le3}
If $x_i>0$ then $P_n\left({\bf x}^m\right)$ satisfies the following inequalities,
\bea
P_n\left({\bf x}^m\right)>0,\quad m=1(\bmod\;2),\qquad 
P_n\left({\bf x}^m\right)<0,\quad m=0(\bmod\;2).
\label{j10}
\eea
\end{lemma}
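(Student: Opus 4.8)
The plan is to replace the alternating-sum definition (\ref{j1}) by a closed form in which the sign is manifest. Since the empty subset contributes $0^n=0$ for $n\ge 1$, we may write $P_n({\bf x}^m)=-\sum_{S}(-1)^{|S|}\left(\sum_{j\in S}x_j\right)^n$, the sum ranging over all $S\subseteq\{1,\ldots,m\}$. I would then expand $\left(\sum_{j\in S}x_j\right)^n=\sum_{f:\{1,\ldots,n\}\to S}\prod_{i=1}^{n}x_{f(i)}$ and interchange the two summations: a fixed map $f:\{1,\ldots,n\}\to\{1,\ldots,m\}$ occurs with weight $\sum_{S\supseteq{\rm im}(f)}(-1)^{|S|}$, which vanishes unless ${\rm im}(f)=\{1,\ldots,m\}$ and equals $(-1)^m$ when $f$ is surjective. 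Collecting surjections by their fibre sizes $a_j=\#f^{-1}(j)\ge 1$ then yields
\be
P_n({\bf x}^m)=(-1)^{m+1}\sum_{a_1+\ldots+a_m=n,\;a_j\ge 1}\frac{n!}{a_1!\cdots a_m!}\;x_1^{a_1}\cdots x_m^{a_m}.\label{jsign}
\ee

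Granted (\ref{jsign}), the Lemma is immediate: if every $x_i>0$ then each summand is strictly positive, so $P_n({\bf x}^m)$ carries the sign $(-1)^{m+1}$, i.e.\ it is positive for odd $m$ and negative for even $m$. (For $n<m$ the index set is empty, recovering $P_n\equiv 0$; for $n=m$ only $a_1=\ldots=a_m=1$ survives, recovering $P_m=(-1)^{m+1}m!\,\chi_m$; and both the factor $\chi_m$ of Corollary \ref{cor1} and the factor $E_1$ of Lemma \ref{le2} when $n-m$ is odd are visible directly in (\ref{jsign}).)

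An equivalent derivation runs through the exponential generating function: since $\sum_{S}(-1)^{|S|}e^{t\sum_{j\in S}x_j}=\prod_{j=1}^{m}\left(1-e^{tx_j}\right)$, one gets $P_n({\bf x}^m)=-n!\,[t^n]\prod_{j=1}^m\left(1-e^{tx_j}\right)$, and writing $1-e^{tx_j}=-tx_j\cdot\frac{e^{tx_j}-1}{tx_j}$ with $\frac{e^u-1}{u}=\sum_{k\ge 0}\frac{u^k}{(k+1)!}$ gives $P_n({\bf x}^m)=(-1)^{m+1}\chi_m\,n!\,[t^{n-m}]\prod_{j=1}^m\frac{e^{tx_j}-1}{tx_j}$, once more a product of positive quantities times $(-1)^{m+1}$. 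I expect the only delicate point to be the inclusion--exclusion/surjection bookkeeping leading to (\ref{jsign}) (equivalently, recognizing $P_n$ as an $m$-fold finite difference of $u^n$); once the closed form is in hand, the inequalities (\ref{j10}) follow with no further computation.
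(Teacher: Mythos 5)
Your proof is correct, and it takes a genuinely different route from the paper's. The paper argues by induction on $m$: it first records the base cases $m=2,3,4$ by direct computation, then invokes the recurrence $P_n({\bf x}^m)=-\sum_{k=1}^{n-1}{n\choose k}x_m^{n-k}P_k\left({\bf x}^{m-1}\right)$ of (\ref{j12}) (stated there as following ``by careful recasting'' of (\ref{j1}), without details), and observes that since every coefficient ${n\choose k}x_m^{n-k}$ is positive, the sign of $P_n$ flips at each increment of $m$. You instead make the sign manifest in one closed form: writing $P_n({\bf x}^m)=-\sum_{S}(-1)^{|S|}\bigl(\sum_{j\in S}x_j\bigr)^n$, your inclusion--exclusion over subsets containing the image of a map $\{1,\ldots,n\}\to\{1,\ldots,m\}$ correctly isolates the surjections (the weight $\sum_{S\supseteq I}(-1)^{|S|}$ is $(-1)^{|I|}0^{m-|I|}$), yielding $P_n=(-1)^{m+1}\sum n!/(a_1!\cdots a_m!)\,x_1^{a_1}\cdots x_m^{a_m}$ over compositions of $n$ into $m$ positive parts, from which the inequalities (\ref{j10}) are immediate for $n\ge m$ (and your observation that the sum is empty for $n<m$ makes explicit a hypothesis the Lemma leaves tacit). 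Your route buys more than the paper's: it reproves (\ref{j2}) and Corollary \ref{cor1} in passing, and the generating-function form gives exactly the expansion of $T_{n-m}({\bf x}^m)/(n-m)!$ that the paper only writes down at the end of Section \ref{l4}. The one overstated point is the parenthetical claim that the factor $E_1$ of Lemma \ref{le2} is ``visible directly'' in the multinomial sum; that divisibility really comes from the evenness of $\sinh(u/2)/(u/2)$ in your product formula, not from inspecting the coefficients, but it plays no role in the proof of this Lemma.
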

\begin{proof}
Prove (\ref{j10}) by induction. First, start with three simple inequalities,
\bea
P_n\left({\bf x}^2\right)\!\!\!&=&\!\!\!x_1^n+x_2^n-\left(x_1+x_2\right)^n<0,
\qquad n\ge 2,\quad x_1,x_2>0,\label{j11}\\
P_n\left({\bf x}^3\right)\!\!\!&=&\!\!\!-\sum_{k=1}^{n-1}{n\choose k}x_3^{n-k}
x_1^k-\sum_{k=1}^{n-1}{n\choose k}x_3^{n-k}x_2^k+\sum_{k=1}^{n-1}{n\choose k}
x_3^{n-k}\left(x_1+x_2\right)^k\nonumber\\
\!\!\!&=&\!\!\!-\sum_{k=1}^{n-1}{n\choose k}x_3^{n-k}P_k\left({\bf x}^2\right)
>0,\qquad n\ge 3,\quad x_1,x_2,x_3>0,\nonumber\\
P_n\left({\bf x}^4\right)\!\!\!&=&\!\!\!-\sum_{k=1}^{n-1}{n\choose k}x_4^{n-k}
x_1^k-\!\sum_{k=1}^{n-1}{n\choose k}x_4^{n-k}x_2^k-\!\sum_{k=1}^{n-1}{n\choose 
k}x_4^{n-k}x_3^k+\!\sum_{k=1}^{n-1}{n\choose k}x_4^{n-k}\left(x_1+x_2\right)^k\!
+\nonumber\\
&&\;\sum_{k=1}^{n-1}{n\choose k}x_4^{n-k}\left(x_2+x_3\right)^k+
\sum_{k=1}^{n-1}{n\choose k}x_4^{n-k}\left(x_3+x_1\right)^k-
\sum_{k=1}^{n-1}{n\choose k}x_4^{n-k}\left(x_1+x_2+x_3\right)^k\nonumber\\
\!\!\!&=&\!\!\!-\sum_{k=1}^{n-1}{n\choose k}x_4^{n-k}P_k\left({\bf x}^3\right)
<0,\qquad n\ge 4,\quad x_1,x_2,x_3,x_4>0.\nonumber
\eea
Next, establish an identity for $P_n\left({\bf x}^m\right)$ relating the last 
one with symmetric polynomials $P_k\left({\bf x}^{m-1}\right)$ of a smaller 
tuple,
\bea
P_n\left({\bf x}^m\right)=-\sum_{k=1}^{n-1}{n\choose k}x_m^{n-k}P_k\left({\bf x}
^{m-1}\right),\quad {\bf x}^{m-1}=\{x_1,\ldots,x_{m-1}\},\quad x_i>0,\label{j12}
\eea
which follows by careful recasting the terms in (\ref{j1}) and further 
simplification. 

Thus, according to (\ref{j12}), if $P_k\left({\bf x}^{m-1}\right)>0$, $x_i>0$, 
irrespectively to $k$, then $P_k\left({\bf x}^m\right)<0$, $x_i>0$, and vice 
versa, if $P_k\left({\bf x}^{m-1}\right)<0$, $x_i>0$, then $P_k\left({\bf x}^m
\right)>0$, $x_i>0$. On the other hand, in (\ref{j11}) the first terms of the 
alternating sequence $P_n({\bf x}^m)$ with growing $m$ satisfy (\ref{j10}). 
Then, by induction, inequalities (\ref{j10}) hold for every $m$.$\;\;\;\;\;\;
\Box$
\end{proof}
\section{Representation of the polynomial $P_n({\bf x}^m)$}\label{l2}
To provide $P_n({\bf x}^m)$ with properties (\ref{j2}) and satisfy Corollary 
\ref{cor1}, we choose the following representation for the polynomial,
\bea
P_n({\bf x}^m)=\frac{(-1)^{m+1}n!}{(n-m)!}\;\chi_mT_{n-m}\left({\bf x}^m\right),
\label{j13}
\eea
where $T_r\left({\bf x}^m\right)$ is a symmetric polynomial of degree $r$ in $m$
variables $x_j$. Combining (\ref{j13}) and Lemma \ref{le3}, we obtain
\bea
T_r\left({\bf x}^m\right)>0,\qquad x_i>0.\label{j14}
\eea

A straightforward calculation (with help of Mathematica software) of the eight 
first polynomials $T_r({\bf x}^m)$ results in the following expressions, 
satisfied Lemma \ref{le2},
\bea
T_0({\bf x}^m)&=&1,\label{j15}\\
T_1({\bf x}^m)&=&\frac1{2}E_1,\nonumber\\
T_2({\bf x}^m)&=&\frac1{3}\frac{3E_1^2+E_2}{4},\nonumber\\
T_3({\bf x}^m)&=&\frac1{4}\frac{E_1^2+E_2}{2}\;E_1,\nonumber\\
T_4({\bf x}^m)&=&\frac1{5}\frac{15E_1^4+30E_1^2E_2+5E_2^2-2E_4}{48},\nonumber\\
T_5({\bf x}^m)&=&\frac1{6}\frac{3E_1^4+10E_1^2E_2+5E_2^2-2E_4}{16}\;E_1,
\nonumber\\
T_6({\bf x}^m)&=&\frac1{7}\frac{63E_1^6+315E_1^4E_2+315E_1^2E_2^2-126E_1^2E_4+
35E_2^3-42E_2E_4+16E_6}{576},\nonumber\\
T_7({\bf x}^m)&=&\frac1{8}\frac{9E_1^6+63E_1^4E_2+105E_1^2E_2^2-42E_1^2E_4+
35E_2^3-42E_2E_4+16E_6}{144}\;E_1.\nonumber
\eea
Formulas (\ref{j15}) for $T_r({\bf x}^m)$ are valid irrespective to the ratio $r
/m$, or, in other words, to the fact how many power sums $E_k$ are algebraically
independent. In fact, if $r>m$ then expressions may be compactified by 
supplementary relations $E_k\!=\!E_k(E_1,\ldots,E_m)$, $k>m$. In section 
\ref{l4} we give such relations for small $m=1,2,3$.

Unlike to elementary symmetric polynomials $\sum_{i_1<i_2<\ldots<i_r}^mx_{i_1}
x_{i_2}\ldots x_{i_r}$ and power sums $E_r({\bf x}^m)$, the symmetric 
polynomials $T_r({\bf x}^m)$, $0\le r\le 7$, are algebraically dependent. 
Indeed, by (\ref{j15}) we get
\bea
\frac{T_3({\bf x}^m)}{T_1^3({\bf x}^m)}&=&3\frac{T_2({\bf x}^m)}{T_1^2({\bf x}
^m)}-2,\label{j16}\\
\frac{T_5({\bf x}^m)}{T_1^5({\bf x}^m)}&=&5\frac{T_4({\bf x}^m)}{T_1^4({\bf x}
^m)}-20\frac{T_2({\bf x}^m)}{T_1^2({\bf x}^m)}+16,\nonumber\\
\frac{T_7({\bf x}^m)}{T_1^7({\bf x}^m)}&=&7\frac{T_6({\bf x}^m)}{T_1^6({\bf x}
^m)}-70\frac{T_4({\bf x}^m)}{T_1^4({\bf x}^m)}+336\frac{T_2({\bf x}^m)}{T_1^2(
{\bf x}^m)}-272.\nonumber
\eea

It is unlikely to arrive at a general formula for $T_r({\bf x}^m)$ with 
arbitrary $r$ by observation of the fractions in (\ref{j15}). However, one can 
recognize a visual similarity between (\ref{j15}) and the other known 
expressions of special polynomials arisen in the theory of partition 
\cite{rf06}.

Recall formulas for a polynomial part $W_1\!\left(s,{\bf d}^m\right)$ of a 
restricted partition function $W\left(s,{\bf d}^m\right)$, where ${\bf d}^m=
\{d_1,\ldots,d_m\}$, which gives a number of partitions of $s\ge 0$ into $m$ 
positive integers $(d_1,\ldots,d_m)$, each not exceeding $s$, and vanishes, if 
such partition does not exist. Following formulas (3.16), (7.1) in \cite{fl17}, 
we obtain
\bea
W_1\!\left(s,{\bf d}^m\right)\!=\!\frac{1}{(m-1)!\;\pi_m}\sum_{r=0}^{m-1}{m-1
\choose r}f_r({\bf d}^m)s^{m-1-r},\qquad f_r({\bf d}^m)\!=\!\left(\sigma_1+
\sum_{i=1}^m{\cal B}\:d_i\right)^r,\label{j17}
\eea
where $\pi_m\!=\!\prod_{j=1}^md_j$ and $\sigma_1\!=\!\sum_{j=1}^md_j$. In 
(\ref{j17}) formula for $f_r({\bf d}^m)$ presumes a symbolic exponentiation 
\cite{rr78}: after binomial expansion the powers $({\cal B}\:d_i)^r$ are 
converted into the powers of $d_i$ multiplied by Bernoulli's numbers ${\cal B}_
r$, i.e., $d_i^r{\cal B}_r$. A  straightforward calculation of eight first 
polynomials $f_r({\bf d}^m)\!=\!f_r(\sigma_1,\ldots,\sigma_r)$ in terms of power
sums $\sigma_k\!=\!\sum_{j=1}^md_j^k$ were performed in \cite{fl17}, formulas 
(7.2),
\bea
f_0({\bf d}^m)&=&1,\label{j18}\\
f_1({\bf d}^m)&=&\frac1{2}\sigma_1,\nonumber\\
f_2({\bf d}^m)&=&\frac1{3}\frac{3\sigma_1^2-\sigma_2}{4},\nonumber\\
f_3({\bf d}^m)&=&\frac1{4}\frac{\sigma_1^2-\sigma_2}{2}\;\sigma_1,\nonumber\\
f_4({\bf d}^m)&=&\frac1{5}\frac{15\sigma_1^4-30\sigma_1^2\sigma_2+5\sigma_2^2+
2\sigma_4}{48},
\nonumber\\
f_5({\bf d}^m)&=&\frac1{6}\frac{3\sigma_1^4-10\sigma_1^2\sigma_2+5\sigma_2^2+
2\sigma_4}{16}\;\sigma_1,\nonumber\\
f_6({\bf d}^m)&=&\frac1{7}\frac{63\sigma_1^6-315\sigma_1^4\sigma_2+315\sigma_1^2
\sigma_2^2+126\sigma_1^2\sigma_4-35\sigma_2^3-42\sigma_2\sigma_4-16\sigma_6}
{576},\nonumber\\
f_7({\bf d}^m)&=&\frac1{8}\frac{9\sigma_1^6-63\sigma_1^4\sigma_2+105\sigma_1^2
\sigma_2^2+42\sigma_1^2\sigma_4-35\sigma_2^3-42\sigma_2\sigma_4-16\sigma_6}{144}
\;\sigma_1.\nonumber
\eea
An absence of power sums $\sigma_k$ with odd indices $k$ are strongly related 
to the presence of Bernoulli's numbers ${\cal B}_r$ in formula (\ref{j17}). A 
simple comparison of formulas (\ref{j15}) and (\ref{j18}) manifests a visual 
similarity between polynomials $T_r({\bf x}^m)$ and $f_r({\bf d}^m)$, which we 
resume in the next conjecture.
\begin{conjecture}\label{con1}
Let $T_r({\bf x}^m)$ and  $f_r({\bf x}^m)$ be symmetric polynomials, defined in 
(\ref{j13}) and (\ref{j17}), respectively. Then, the following relation holds
\bea
T_r(E_1,E_2,\ldots,E_r)=f_r(E_1,-E_2,\ldots,-E_r),\quad r\ge 2,\label{j19}
\eea
where signs of arguments $E_j$ are changed only at $E_2,\ldots,E_r$.
\end{conjecture}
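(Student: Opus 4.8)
\medskip
\noindent\textbf{Sketch of a proof.}
The plan is to realize both $T_r$ and $f_r$ as coefficients of exponential generating functions and then to reduce (\ref{j19}) to an elementary identity involving $\sinh$. First, reading the inclusion--exclusion sum (\ref{j1}) ``backwards'', one recognizes that $\prod_{j=1}^m(e^{x_jt}-1)=\sum_{S\subseteq\{1,\dots,m\}}(-1)^{m-|S|}\exp\!\big(t\sum_{j\in S}x_j\big)$; comparing the coefficient of $t^n$ with (\ref{j1}) --- the empty set contributing nothing for $n\ge1$ --- gives $P_n({\bf x}^m)=(-1)^{m+1}n!\,[t^n]\prod_{j=1}^m(e^{x_jt}-1)$, where $[t^k]$ denotes extraction of the coefficient of $t^k$. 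Writing $e^{x_jt}-1=x_jt\cdot\frac{e^{x_jt}-1}{x_jt}$, factoring out $t^m\chi_m$, and matching with the chosen normalization (\ref{j13}) yields
\[
T_r({\bf x}^m)=r!\,[t^r]\prod_{j=1}^m\frac{e^{x_jt}-1}{x_jt}.
\]
Incidentally this recovers (\ref{j2}) (since $\prod_j(e^{x_jt}-1)$ starts with $t^m\chi_m$), and it shows that $T_r$, written through the power sums, has the same shape for every $m$, so that $T_r(E_1,\dots,E_r)$ is a well-defined polynomial.

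In exactly the same way, the symbolic exponentiation defining $f_r$ in (\ref{j17}), together with $\sum_{k\ge0}{\cal B}^{+}_k\frac{u^k}{k!}=\frac{u}{1-e^{-u}}$ where ${\cal B}^{+}_k=(-1)^k{\cal B}_k$, gives
\[
f_r({\bf d}^m)=r!\,[t^r]\prod_{i=1}^m\frac{d_it}{1-e^{-d_it}},
\]
which can also be verified against (\ref{j18}). Next I would take logarithms term by term: for any power series $\phi(u)=\sum_{k\ge1}\phi_ku^k$ with vanishing constant term one has $\sum_j\phi(x_jt)=\sum_{k\ge1}\phi_kE_kt^k$, whence
\[
\prod_{j=1}^m\frac{e^{x_jt}-1}{x_jt}=\exp\!\Big(\sum_{k\ge1}g_kE_kt^k\Big),\qquad
\prod_{i=1}^m\frac{d_it}{1-e^{-d_it}}=\exp\!\Big(\sum_{k\ge1}h_k\sigma_kt^k\Big),
\]
where $g(u)=\log\frac{e^u-1}{u}=\sum_{k\ge1}g_ku^k$ and $h(u)=\log\frac{u}{1-e^{-u}}=\sum_{k\ge1}h_ku^k$. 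Hence, as polynomials in the formal variables $E_1,\dots,E_r$, both $T_r$ and $f_r$ equal $r!\,[t^r]$ applied to the exponential of $\sum_{k=1}^r c_kE_kt^k$, with $c_k=g_k$ for $T_r$ and $c_k=h_k$ for $f_r$.

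The decisive step is the pair of factorizations $\frac{e^u-1}{u}=e^{u/2}\cdot\frac{\sinh(u/2)}{u/2}$ and $\frac{u}{1-e^{-u}}=e^{u/2}\cdot\frac{u/2}{\sinh(u/2)}$, which give
\[
g(u)=\frac{u}{2}+\psi(u),\qquad h(u)=\frac{u}{2}-\psi(u),\qquad \psi(u):=\log\frac{\sinh(u/2)}{u/2},
\]
where $\psi$ is an even power series vanishing at the origin. Therefore $g_1=h_1=\frac12$, while $g_k=-h_k$ for every $k\ge2$ (both equal $\pm\psi_k$; moreover $\psi_k=0$ for odd $k$, which is precisely the absence of power sums of odd index in (\ref{j18})). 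Replacing $E_k$ by $-E_k$ for $k\ge2$ in the exponent of $f_r$ thus turns $h_1E_1t+\sum_{k\ge2}h_kE_kt^k$ into $g_1E_1t+\sum_{k\ge2}g_kE_kt^k$, the exponent occurring for $T_r$; applying $r!\,[t^r]$ gives $f_r(E_1,-E_2,\dots,-E_r)=T_r(E_1,\dots,E_r)$, that is (\ref{j19}) (the restriction $r\ge2$ merely excludes the trivial cases in which no $E_k$ with $k\ge2$ occurs).

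I expect the only genuine work to lie in isolating the two product representations and in checking that the normalizations in (\ref{j13}) and (\ref{j17}) are precisely those which make $T_r$ and $f_r$ equal to the coefficients of $\prod_j\frac{e^{x_jt}-1}{x_jt}$ and $\prod_i\frac{d_it}{1-e^{-d_it}}$; once that is in place, the conjecture collapses to the single identity $\frac{e^u-1}{u}\cdot\frac{u}{1-e^{-u}}=e^u$, equivalently to the evenness of $\psi$.
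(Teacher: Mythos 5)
The paper does not prove this statement at all: Conjecture \ref{con1} is left open (see the remark in Section \ref{l4}), and the only evidence offered is the term-by-term visual comparison of the tables (\ref{j15}) and (\ref{j18}) for $r\le 7$. Your argument, by contrast, is an actual proof, and I believe it is correct. The two generating-function identifications check out: expanding $\prod_{j=1}^m(e^{x_jt}-1)$ over subsets and comparing with (\ref{j1}) does give $P_n({\bf x}^m)=(-1)^{m+1}n!\,[t^n]\prod_j(e^{x_jt}-1)$, hence $T_r=r!\,[t^r]\prod_j\frac{e^{x_jt}-1}{x_jt}$ after matching the normalization (\ref{j13}); and rewriting (\ref{j17}) as $W_1(s,{\bf d}^m)=\frac{1}{\pi_m}[t^{m-1}]\bigl(e^{st}\prod_i\frac{d_it}{1-e^{-d_it}}\bigr)$ gives $f_r=r!\,[t^r]\prod_i\frac{d_it}{1-e^{-d_it}}$; both formulas reproduce the tabulated $T_2,f_2,T_4,f_4$, and as a bonus your first identity re-derives (\ref{j2}), Lemmas \ref{le1}--\ref{le2} and Corollary \ref{cor1} in one line. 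The reduction to $g(u)-u/2=-(h(u)-u/2)=\log\frac{\sinh(u/2)}{u/2}$ is exactly the right mechanism; it also explains structurally why only even-index power sums beyond $E_1$ appear, which the paper attributes loosely to "the presence of Bernoulli's numbers." The one point you should spell out when writing this up is the passage from an identity of symmetric functions of $x_1,\ldots,x_m$ to an identity of polynomials in the formal variables $E_1,\ldots,E_r$: for fixed $r$ take $m\ge r$ so that $E_1,\ldots,E_r$ are algebraically independent and the expressions $T_r(E_1,\ldots,E_r)$, $f_r(\sigma_1,\ldots,\sigma_r)$ are the unique ones valid for all $m$ (which is what the paper means by "valid irrespective to the ratio $r/m$"). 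With that remark added, your sketch settles the conjecture, and the same machinery combined with the parity of $\psi$ would also dispose of Conjecture \ref{con2}.
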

\section{Parity properties of $W_1\!\left(s,{\bf d}^m\right)$ and 
generalization of identities for $T_r({\bf x}^m)$}\label{l3}
The polynomials $T_r({\bf x}^m)$ and $f_r({\bf d}^m)$ possess one more kind of 
similarity besides of formulas in (\ref{j15},\ref{j18}). It is easy to verify 
that identities (\ref{j16}) hold for functions $f_r({\bf d}^m)$ by replacing 
$T_r({\bf x}^m)\to f_r({\bf d}^m)$. Keeping in mind such similarity, let us 
find a general form of identities for $f_r({\bf d}^m)$. Making use of a 
recursive relation in \cite{rf06}, formula (12), for their generating function 
$W_1\!\left(s,{\bf d}^m\right)$,
\bea
W_1\!\left(s,{\bf d}^m\right)=W_1\!\left(s-d_m,{\bf d}^m\right)+W_1\!\left(s,
{\bf d}^{m-1}\right),\qquad{\bf d}^{m-1}=\{d_1,\ldots,d_{m-1}\},\label{j20}
\eea
prove the parity properties 
\bea
W_1\!\left(s\!-\!\frac{\sigma_1}{2},{\bf d}^{2m}\right)\!=\!-W_1\!\left(-s\!-\!
\frac{\sigma_1}{2},{\bf d}^{2m}\right),\quad W_1\!\left(s\!-\!\frac{\sigma_1}
{2},{\bf d}^{2m+1}\right)\!=\!W_1\!\left(-s\!-\!\frac{\sigma_1}{2},{\bf d}^
{2m+1}\right),\;\;\label{j21}
\eea
following a similar proof for the whole partition function $W\left(s,{\bf d}^m
\right)$ in \cite{fr02}, Lemma 4.1. Indeed, the recursive relation (\ref{j20}) 
may be rewritten for $V_1\left(s,{\bf d}^m\right)=W_1\left(s-\sigma_1/2,{\bf d}
^m\right)$, where $\sigma_1/2=f_1({\bf d}^m)$,
\bea 
V_1\left(s,{\bf d}^m\right)=V_1\left(s-d_m,{\bf d}^m\right)+V_1\left(s-
\frac{d_m}{2},{\bf d}^{m-1}\right).\nonumber
\eea
Making use of a new variable $q=s-d_m/2$, the last relation reads
\bea
V_1\left(q,{\bf d}^{m-1}\right)&\!=\!&V_1\left(q+\frac{d_m}{2},{\bf d}^m\right)
-V_1\left(q-\frac{d_m}{2},{\bf d}^m\right),\nonumber\\
-V_1\left(-q,{\bf d}^{m-1}\right)&\!=\!&V_1\left(-q-\frac{d_m}{2},{\bf d}^m
\right)-V_1\left(-q+\frac{d_m}{2},{\bf d}^m\right).\nonumber
\eea
Hence, if $V_1\!\left(q,{\bf d}^m\right)$ is an even function of $q$, then $V_1
\!\left(q,{\bf d}^{m-1}\right)$ is an odd one, and vice versa. But, according 
to (\ref{j17}), for $m=1$ we have $V_1\!\left(q,{\bf d}^1\right)=W_1\!\left(q-
d_1/2,{\bf d}^1\right)=1/d_1$, where ${\bf d}^1=\{d_1\}$, or in other words, 
the function $V_1\!\left(q,{\bf d}^1\right)$ is even in $q$. Therefore we obtain
\bea
V_1\left(s,{\bf d}^{2m}\right)=-V_1\left(-s,{\bf d}^{2m}\right),\qquad
V_1\left(s,{\bf d}^{2m+1}\right)=V_1\left(-s,{\bf d}^{2m+1}\right),\nonumber
\eea
that finally leads to (\ref{j21}). 

Identities (\ref{j21}) impose a set of relations on $f_r({\bf d}^m)$. To find 
them, we have to cancel in a series expansion (\ref{j17}) for $W_1\!\left(s\!-
\!f_1({\bf d}^{2m}),{\bf d}^{2m}\right)$ all terms with even degrees of $s$
\bea
s^{2m-1-r}\sum_{k=0}^r(-1)^k{2m-1\choose r-k}{2m-1-r+k\choose k}f_1^k\left(
{\bf d}^{2m}\right)f_{r-k}\left({\bf d}^{2m}\right),\label{j22}
\eea
and for $W_1\!\left(s\!-\!f_1({\bf d}^{2m+1}),{\bf d}^{2m+1}\right)$ all terms 
with odd degrees of $s$
\bea
s^{2m-r}\sum_{k=0}^r(-1)^k{2m\choose r-k}{2m-r+k\choose k}f_1^k\left({\bf d}^
{2m+1}\right)f_{r-k}\left({\bf d}^{2m+1}\right).\label{j23}
\eea
Making use of identity for binomial coefficients
\bea
{A-1\choose B-1-C}{A-B+C\choose C}={A-1\choose B-1}{B-1\choose C},
\qquad A>B>C\ge 0,\nonumber
\eea
and substituting $r=2n-1$ into (\ref{j22}) and (\ref{j23}), and equating them 
to zero, we obtain, respectively,
\bea
s^{2(m-n)}{2m-1\choose 2n-1}\sum_{k=0}^{2n-1}(-1)^k{2n-1\choose k}f_1^k\left(
{\bf d}^{2m}\right)f_{2n-1-k}\left({\bf d}^{2m}\right)=0,\;\;\label{j24}\\
s^{2(m-n)+1}{2m\choose 2n-1}\sum_{k=0}^{2n-1}(-1)^k{2n-1\choose k}f_1^k\left(
{\bf d}^{2m+1}\right)f_{2n-1-k}\left({\bf d}^{2m+1}\right)=0.\;\;\label{j25}
\eea
By comparison (\ref{j24}) and (\ref{j25}) and keeping in mind $f_1\left({\bf 
d}^m\right)\ne 0$, we arrive at universal relation irrespectively to the parity
of $m$,
\bea
\frac{f_{2n-1}\left({\bf d}^m\right)}{f_1^{2n-1}\left({\bf d}^m\right)}=\sum
_{k=1}^{2n-1}(-1)^{k+1}{2n-1\choose k}\frac{f_{2n-1-k}\left({\bf d}^m\right)}
{f_1^{2n-1-k}\left({\bf d}^m\right)},\qquad 1\le n\le\frac{m}{2}.\label{j26}
\eea
Note, that for $n=1$ equality (\ref{j26}) holds identically. Applying a 
recursive procedure to formula (\ref{j26}), the last expression may be 
represented as follows,
\bea
\frac{f_{2n-1}\left({\bf d}^m\right)}{f_1^{2n-1}\left({\bf d}^m\right)}&
\!\!\!\!=\!\!\!\!&\!\!
\sum_{k_1=1}^n{2n-1\choose 2k_1-1}\frac{f_{2(n-k_1)}\left({\bf d}^m\right)}
{f_1^{2(n-k_1)}\left({\bf d}^m\right)}-\label{j27}\\
&&\!\!\sum_{k_1,k_2=1}^n\!\!{2n-1\choose 2k_1}{2(n-k_1)-1\choose 2k_2-1}
\frac{f_{2(n-k_1-k_2)}\left({\bf d}^m\right)}{f_1^{2(n-k_1-k_2)}
\left({\bf d}^m\right)}+\nonumber\\
&&\!\!\!\!\sum_{k_1,k_2,k_3=1}^n\!\!\!{2n-1\choose 2k_1}\!{2(n-k_1)-1\choose 
2k_2}\!{2(n-k_1-k_2)-1\choose 2k_3-1}\!\frac{f_{2(n\!-\!k_1\!-\!k_2\!-\!k_3)}
\left({\bf d}^m\right)}{f_1^{2(n\!-\!k_1\!-\!k_2\!-\!k_3)}\left({\bf d}^m\right)
}\!-\ldots\nonumber
\eea
where a number of summation is equal $n$. Finally, formula (\ref{j27}) may be 
presented in a more simple way
\bea
\frac{f_{2n-1}\left({\bf d}^m\right)}{f_1^{2n-1}\left({\bf d}^m\right)}=
\sum_{r=1}^n(-1)^{r+1}C_{n,r}\;\frac{f_{2(n-r)}({\bf d}^m)}{f_1^{2(n-r)}
({\bf d}^m)},\qquad C_{n,r}\in {\mathbb Z}_>,\label{j28}
\eea
where coefficients $C_{n,r}$ with $r=1,2,3,4$ are calculated below
\bea
C_{n,1}&\!\!\!\!=\!\!\!\!&{2n-1\choose 1},\label{j29}\\
C_{n,2}&\!\!\!\!=\!\!\!\!&{2n-1\choose 2}{2n-3\choose 1}-{2n-1\choose 3},
\nonumber\\
C_{n,3}&\!\!\!\!=\!\!\!\!&{2n-1\choose 2}{2n-3\choose 2}{2n-5\choose 1}\!-\!
{2n-1\choose 2}{2n-3\choose 3}\!-\!{2n-1\choose 4}{2n-5\choose 1}\!+\!
{2n-1\choose 5},\nonumber\\
C_{n,4}&\!\!\!\!=\!\!\!\!&{2n-1\choose 2}{2n-3\choose 2}{2n-5\choose 2}
{2n-7\choose 1}\!-\!{2n-1\choose 2}{2n-3\choose 4}{2n-7\choose 1}\!-
\nonumber\\
&&{2n-1\choose 4}{2n-5\choose 2}{2n-7\choose 1}\!-\!{2n-1\choose 2}
{2n-3\choose 2}{2n-5\choose 3}\!+\nonumber\\
&&{2n-1\choose 2}{2n-3\choose 5}\!+\!{2n-1\choose 4}{2n-5\choose 3}\!+\!
{2n-1\choose 6}{2n-7\choose 1}\!-\!{2n-1\choose 7},\nonumber
\eea
and the higher $C_{n,r}$ have to be determined recursively by (\ref{j27}). The 
total number of terms (products of binomial coefficients) contributing to 
formula (\ref{j29}) for $C_{n,r}$ is given by $2^{r-1}$.

It is easy to verify that formulas (\ref{j28}) do nicely provide the integer 
coefficients in (\ref{j16}) for $n=2,3,4$ successively. That observation make 
us to pose the next conjecture.
\begin{conjecture}\label{con2}
Let $T_r({\bf x}^m)$ be symmetric polynomials, defined in (\ref{j13}), then 
$T_r({\bf x}^m)$ satisfy the following identities,
\bea
\frac{T_{2n-1}\left({\bf x}^m\right)}{T_1^{2n-1}\left({\bf x}^m\right)}=\sum_
{k=1}^{2n-1}(-1)^{k+1}{2n-1\choose k}\frac{T_{2n-1-k}\left({\bf x}^m\right)}
{T_1^{2n-1-k}\left({\bf x}^m\right)},\qquad 1\le n\le\frac{m}{2}.\label{i30}
\eea
\end{conjecture}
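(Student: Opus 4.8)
The plan is to reduce Conjecture \ref{con2} to Conjecture \ref{con1}, which makes the desired identities for $T_r$ a direct consequence of the already-proven identities \eqref{j26} for $f_r$. Indeed, if \eqref{j19} holds, then for each monomial in the power sums appearing in $f_{2n-1-k}(\sigma_1,\ldots)$ we pass to $T_{2n-1-k}(E_1,\ldots)$ by flipping the signs of $E_2,\ldots,E_{2n-1-k}$; since $f_1=\sigma_1/2$ and $T_1=E_1/2$ are unaffected by this sign change, the ratios $f_{2n-1-k}/f_1^{2n-1-k}$ and $T_{2n-1-k}/T_1^{2n-1-k}$ are related by exactly the same substitution. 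The identity \eqref{j26} is a polynomial identity in the $\sigma_k$ (equivalently in the $E_k$ after the substitution), so applying the sign-flip substitution to both sides of \eqref{j26} yields \eqref{i30} verbatim. Thus the first step is to state: \emph{assuming Conjecture \ref{con1}}, the substitution $\sigma_k\mapsto(-1)^{k+1}E_k$ carries \eqref{j26} to \eqref{i30}, so Conjecture \ref{con2} follows.

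However, this only defers the problem, and a self-contained proof of \eqref{i30} is preferable. The second, more robust approach is to mimic for $T_r$ the entire derivation that produced \eqref{j26} for $f_r$. The key is to find the analogue of the parity statement \eqref{j21}. Here I would use the recursion \eqref{j12}, $P_n({\bf x}^m)=-\sum_{k=1}^{n-1}\binom{n}{k}x_m^{n-k}P_k({\bf x}^{m-1})$, together with the representation \eqref{j13}. Substituting \eqref{j13} into \eqref{j12} and simplifying the factorials and the products $\chi_m=x_m\chi_{m-1}$, one obtains a recursion of the form
\bea
T_{n-m}({\bf x}^m)=\sum_{k=m-1}^{n-1}\binom{n-m}{k-m+1}x_m^{n-k}\,\frac{(\text{ratio of factorials})}{(\text{...})}\,T_{k-m+1}({\bf x}^{m-1}),\nonumber
\eea
which after reindexing ($r=n-m$, $j=k-m+1$) becomes a clean identity expressing $T_r({\bf x}^m)$ as a weighted sum of $x_m^{\,r-j}T_j({\bf x}^{m-1})$ for $0\le j\le r$. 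This is the exact structural counterpart of \eqref{j20}.

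The third step is to introduce the shifted polynomial $U_r({\bf x}^m)$ obtained from $T_r({\bf x}^m)$ by the shift $E_1\mapsto$ (something centered), in imitation of $V_1(s,{\bf d}^m)=W_1(s-\sigma_1/2,{\bf d}^m)$. Concretely, since $T_1({\bf x}^m)=E_1/2$ plays the role of $f_1=\sigma_1/2$, I would define a generating polynomial $\mathcal{T}({\bf x}^m;t)=\sum_{r\ge 0}\binom{m-1}{r}T_r({\bf x}^m)\,t^{m-1-r}$ (mirroring \eqref{j17}), translate by $t\mapsto t-E_1/2$, and show from the recursion of step two that the resulting centered polynomial is even in $t$ when $m$ is odd and odd in $t$ when $m$ is even, with the base case $m=1$ giving the constant $1$. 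Extracting coefficients and using the same binomial identity ${A-1\choose B-1-C}{A-B+C\choose C}={A-1\choose B-1}{B-1\choose C}$ reproduces \eqref{i30} with $T_r$ in place of $f_r$, in the range $1\le n\le m/2$.

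The main obstacle will be step two and the first half of step three: verifying that \eqref{j12} plus \eqref{j13} really collapse into a recursion of the same shape as \eqref{j20}, with the correct binomial weights so that the centered generating polynomial has the claimed parity. The recursion \eqref{j12} has binomial weights $\binom{n}{k}$ and an explicit $x_m^{n-k}$, whereas \eqref{j20} is a bare sum of two terms; the bridge is precisely the factor $n!/(n-m)!$ in \eqref{j13}, and one must check that after dividing it out the weights become $\binom{m-1}{r}$-compatible. If this structural match holds, everything else is a mechanical repeat of the computation in Section \ref{l3}; if it does not hold exactly, one falls back on the sign-substitution argument of the first paragraph, which is clean but logically dependent on Conjecture \ref{con1}.
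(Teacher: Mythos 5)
The paper does not prove this statement: it is posed as an open conjecture, supported only by the observation that (\ref{j28}) reproduces the integer coefficients of (\ref{j16}) for $n=2,3,4$. So there is no proof in the paper to compare against, and the real question is whether your plan closes the gap. Your first route does not: deducing (\ref{i30}) from (\ref{j26}) via the substitution $\sigma_1\mapsto E_1$, $\sigma_k\mapsto -E_k$ is a valid implication (since $\sigma_1,\ldots,\sigma_{m-1}$ are algebraically independent, (\ref{j26}) is a genuine polynomial identity and survives the substitution), but it is conditional on Conjecture \ref{con1}, which the paper also leaves open, so nothing unconditional is obtained.

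Your second route is the right idea and can be completed, but two details as written are off. First, carrying out your step two from (\ref{j12}) and (\ref{j13}) gives the clean recursion $T_r({\bf x}^m)=\sum_{j=0}^r{r\choose j}\frac{x_m^{r-j}}{r-j+1}\,T_j({\bf x}^{m-1})$; the natural generating object is therefore the exponential series $\sum_{r\ge0}T_r({\bf x}^m)t^r/r!=\prod_{j=1}^m\frac{e^{x_jt}-1}{x_jt}$, not the degree-$(m-1)$ polynomial $\sum_r{m-1\choose r}T_r\,t^{m-1-r}$ you propose in imitation of (\ref{j17}) --- the $T_r$ do not terminate at $r=m-1$, so that truncation discards exactly the coefficients you need. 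Second, the parity does not alternate with $m$ as it does for $V_1$: centering by $e^{-E_1t/2}$ turns the product into $\prod_{j=1}^m\frac{2\sinh(x_jt/2)}{x_jt}$, which is even in $t$ for \emph{every} $m$. Extracting the odd coefficients gives $\sum_{k=0}^{2n-1}(-1)^k{2n-1\choose k}T_1^k({\bf x}^m)\,T_{2n-1-k}({\bf x}^m)=0$ for all $n\ge1$, which is (\ref{i30}) with the restriction $1\le n\le m/2$ removed. So your plan, once these two points are corrected, not only proves the conjecture but strengthens it; the restriction $n\le m/2$ is an artifact of mimicking the $W_1$ argument, where $W_1(s,{\bf d}^m)$ is a polynomial of degree $m-1$ in $s$ and the parity genuinely alternates.
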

\section{Miscellaneous}\label{l4}
In this section we give a list of double inequalities for a ratio of 
polynomials, $T_r({\bf x}^m)/T_1^r({\bf x}^m)$, $r\le 7$, by applying a simple 
version of Maclaurin's inequalities \cite{har59} for power sums $E_r$,
\bea
\frac1{m}E_1^2\le E_2\le E_1^2,\hspace{.5cm}\frac1{m}E_2^2\le E_4\le E_2^2,
\hspace{.5cm}\frac1{m}E_3^2\le E_6\le E_3^2,\hspace{.5cm}\frac{E_1^3}{m^3}\le
\frac1{m}\frac{E_2^2}{E_1}\le\frac{E_4}{E_1}\le E_3\le E_1^3,\nonumber
\eea
to the expressions in (\ref{j15}),
\bea
1+\frac1{3m}&\le\frac{T_2({\bf x}^m)}{T_1^2({\bf x}^m)}\le&\frac{4}{3},
\label{j31}\\
1+\frac1{m}&\le\frac{T_3({\bf x}^m)}{T_1^3({\bf x}^m)}\le&2,\nonumber\\
\frac{13}{15}+\frac{2}{m}+\frac1{3m^2}&\le\frac{T_4({\bf x}^m)}{T_1^4({\bf x}
^m)}\le&\frac{2}{3}\left(5-\frac1{5m}\right),\nonumber\\
\frac1{3}+\frac{10}{3m}+\frac{5}{3m^2}&\le\frac{T_5({\bf x}^m)}{T_1^5({\bf x}
^m)}\le&2\left(3-\frac1{3m}\right),\nonumber\\
\frac{8}{9}+\frac{5}{m}+\frac{3}{m^2}+\frac{16}{63m^7}&\le\frac{T_6({\bf x}^m)}
{T_1^6({\bf x}^m)}\le&\frac{8}{3}\left(\frac{31}{7}-\frac1{m}\right),\nonumber\\
\frac{2}{9}+\frac{7}{m}+\frac{7}{m^2}+\frac{16}{9m^7}&\le\frac{T_7({\bf x}^m)}
{T_1^7({\bf x}^m)}\le&\frac{4}{3}\left(19-\frac{7}{m}\right).\nonumber
\eea

We finish the paper with general expressions for polynomials $T_r({\bf x}^m)$ 
of small $m=1,2,3$ and supplementary relations for power sums $E_k$.
\bea
m=1,\qquad T_{r-1}({\bf x}^1)=\frac{x^{r-1}}{r},\qquad\frac{T_r({\bf x}^1)}
{T_1^r({\bf x}^1)}=\frac{2^r}{r+1},\qquad E_k=E_1^k.\label{j32}
\eea
It is easy verify by (\ref{j31}) that the lower and upper bounds coincide when 
$m=1$ and satisfies (\ref{j32}).
\bea
m=2,\qquad\frac{T_{r-2}({\bf x}^2)}{(r-2)!}=\sum_{k_1,k_2=1\atop k_1+k_2=r}
^{r-1}\frac{x_1^{k_1-1}x_2^{k_2-1}}{k_1!\;k_2!},\hspace{3.5cm}\label{j33}
\eea
\vspace{-.3cm}
\bea
&&E_3=\frac1{2}\left(3E_2-E_1^2\right)E_1,\qquad E_4=E_1^2E_2+\frac1{2}\left(
E_2^2-E_1^4\right),\qquad E_5=\frac1{4}\left(5E_2^2-E_1^4\right)E_1,\qquad
\nonumber\\
&&E_6=\frac1{4}\left(E_2^2+6E_1^2E_2-3E_1^4\right)E_2.\nonumber
\eea
\bea m=3,\qquad\frac{T_{r-3}({\bf x}^3)}{(r-3)!}=\sum_{k_1,k_2,k_3=1\atop 
k_1+k_2+k_3=r}^{r-2}\frac{x_1^{k_1-1}x_2^{k_2-1}x_3^{k_3-1}}{k_1!\;k_2!\;k_3!},
\hspace{2cm}\label{j34}
\eea
\vspace{-.3cm}
\bea
&&E_4=\frac1{6}\left(E_1^4+3E_2^2-6E_1^2E_2+8E_1E_3\right),\qquad E_5=\frac1{6}
\left(E_1^5-5E_1^3E_2+5E_1^2E_3+5E_2E_3\right),\nonumber\\
&&E_6=\frac1{12}\left(E_1^6+2E_2^3+4E_3^2-9E_1^2E_2^2+12E_1E_2E_3-3E_1^4E_2+
4E_1^3E_3\right).\nonumber
\eea

In regards with Conjecture \ref{con1}, which is left open, it would be worth to 
find such a transformation of expression (\ref{j33},\ref{j34}) and its 
generalization on arbitrary $m$
\bea
\frac{T_{r-m}({\bf x}^m)}{(r-m)!}=\sum_{k_1,\ldots,k_m=1\atop k_1+\ldots +k_m
=r}^{r-m+1}\prod_{j=1}^m\frac{x_j^{k_j-1}}{k_j!},\nonumber
\eea
which is similar to (\ref{j17}) including Bernoulli's numbers.

\end{document}